\newtheorem{theorem}{\bf Theorem}[section]
\newtheorem{lemma}[theorem]{\bf Lemma}
\newtheorem{problem}[theorem]{\bf Problem}
\newtheorem{prop}[theorem]{\bf Proposition}
\newtheorem{nota}[theorem]{\bf Notation}
\newcommand{\cP}{\mathcal{P}}
\newcommand{\cL}{\mathcal{L}}
\newcommand{\PG}{\mathrm{PG}}
\newcommand{\cut}[1]{}
\title{Saturating sets in projective planes and hypergraph covers}
\author{
  {\bf Zolt\'an L\'or\'ant Nagy}\thanks{The author is supported by the Hungarian Research Grant (OTKA) No. K 120154 and by the J\'anos Bolyai Research Scholarship of the Hungarian Academy of Sciences}\\ \\ 
{\small MTA--ELTE Geometric and Algebraic Combinatorics Research Group}\\
{\small H--1117 Budapest, P\'azm\'any P.\ s\'et\'any 1/C, Hungary}\\
{\small \tt{  nagyzoli@cs.elte.hu}}}
\date{}
\begin{document}
\maketitle

\begin{abstract} 
Let $\Pi_q$ be an arbitrary finite projective plane of order $q$. A subset $S$ of its points is called saturating if any point outside $S$ is collinear with a pair of points from $S$.
Applying probabilistic tools  we improve the upper bound on the smallest possible size of the saturating set to $\lceil\sqrt{3q\ln{q}}\rceil+ \lceil(\sqrt{q}+1)/2\rceil$.
The same result is presented using an algorithmic approach as well, which points out the connection with the transversal number of uniform multiple intersecting hypergraphs.

  \bigskip\noindent \textbf{Keywords:} projective plane, saturating set, dense set, transversal, blocking set, complete arcs, hypergraph cover 
\end{abstract}

\section{Introduction}
Let $\Pi_q$ be an arbitrary finite projective plane of order $q$, and $\cP, \cL$ denote the point and line set of the plane, respectively. A subset $S$ of its points is called a saturating set if any point outside $S$ is collinear with two points in $S$. In other words, the secants of the point set cover the whole plane.\\
The size of saturating (or sometimes also called \textit{dense, saturated, determined}) sets has been widely investigated, see \cite{survey} for a recent survey. The importance of such sets relies on connections to covering codes \cite{Davy}, algebraic curves over finite fields \cite{Giu}, sumset theory \cite{Grin} and complete arcs \cite{survey}. In fact, there is a one-to-one correspondence between saturating sets in $\PG(2,q)$ of cardinality $|S|$ and linear $q$-ary covering codes of codimension $3$ and covering radius $2$, of length $|S|$. Here we only refer the reader to    \cite{Cohen, Davy} for the details.

\begin{nota} For any points $P$ and $Q$, the set $\left\langle P, Q\right\rangle$ denotes the points of the line determined by $P$ and $Q$. In general, if $S$ and $T$ are point sets, then the set $\left\langle S,T\right\rangle$ denotes the points of the lines determined by a point from $S$ and a point from $T$. For convenience, if one of the sets consists of a single point, we use $P$ instead of $\{P\}$. \\ The set $\left\langle S,S \right\rangle$ denotes the points of the lines determined by a distinct point pair from $S$.
\end{nota}

Clearly, the notation rewrites the concept of saturating sets to the following property on a set $S$: $\left\langle S, S \right\rangle=\cP(\Pi_q)$.

An \textit{arc} in the projective plane is a set of points such that no three points of the set are on a line,
and  arcs which can not be extended to a larger arc are called \textit{complete arcs}. Observe that complete arcs are special separating sets. Due to the pioneer work of  Ughi, Sz\H{o}nyi, Boros, Giulietti, Davydov, Marcugini, Pambianco, Kim and Vu \cite{Szonyi, DMP, survey, kimvu, Ughi} several bounds are known for particular finite projective planes concerning maximal arcs and saturating sets.

It is well known (and mentioned first in \cite{Ughi} concerning saturating sets) that the following proposition must hold:

\begin{prop}  $ |S|> \sqrt{2q}+1$ for any saturating set $S$ in $\Pi_q$.
\end{prop}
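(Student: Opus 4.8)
The plan is to exploit the defining property $\langle S,S\rangle=\cP$ by double counting: the secants of $S$ cover all $q^2+q+1$ points, yet there are at most $\binom{|S|}{2}$ of them. Set $n=|S|$ and suppose $n$ is small. First I would fix a point $P\in S$ and look at the pencil of $q+1$ lines through $P$. Exactly those lines carrying a second point of $S$ are secants; let $r$ be their number, so $r\le n-1$. The secants through $P$ meet pairwise only in $P$, hence cover precisely $1+rq$ points, leaving $q(q+1-r)$ points that must be reached by secants avoiding $P$. Here is the one geometric input: a line $m$ with $P\notin m$ meets the $q+1$ lines through $P$ in $q+1$ distinct points, so it meets the $q+1-r$ non-secant lines through $P$ in exactly $q+1-r$ points; therefore each secant off $P$ covers at most $q+1-r$ of the points still to be covered. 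Dividing the $q(q+1-r)$ uncovered points by $q+1-r$ forces at least $q$ secants off $P$, so
\[
\binom{n}{2}\ \ge\ \#\{\text{secants}\}\ \ge\ q+r .
\]

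Since this holds for every $P$, I would apply it to a point of $S$ lying on the maximum number $r_{\max}$ of secants, and bound that maximum below by the average: using $\sum_{A\in S} r_A=\sum_{\ell}\,|\ell\cap S|\ge 2\,\#\{\text{secants}\}\ge 2(q+1)$ gives $r_{\max}\ge 2(q+1)/n$. Plugging this in yields $\binom{n}{2}\ge q+2(q+1)/n$, and a short monotonicity check shows the right-hand side exceeds the left as soon as $n\le\sqrt{2q}+1$ (at $n=\sqrt{2q}+1$ the left side is $q+\tfrac12\sqrt{2q}$ while the right side is about $q+\sqrt{2q}$). This contradiction gives the strict bound.

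The hard part is not the leading order but the additive constant. The crudest count --- each of the $\le\binom{n}{2}$ secants kills at most $q-1$ outside points --- only delivers $n\gtrsim\sqrt{2q}$, and every estimate that merely counts coverage with multiplicity stalls there because it is tight for arcs. The extra $+1$ comes from genuinely using that the secants of a non-collinear set are not concurrent; in the argument above this is exactly what the pencil/fan count at $P$ encodes. I therefore expect the delicate step to be the passage from $\binom{n}{2}\gtrsim q$ to the sharp $\binom{n}{2}\ge q+r$, after which the conclusion is routine algebra.
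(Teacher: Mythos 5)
The paper does not actually prove this proposition---it is quoted as the well-known Lunelli--Sce-type bound with a pointer to Ughi---so your argument can only be judged on its own terms, and on those terms it is correct and genuinely supplies the missing proof. Your diagnosis of where the difficulty sits is also accurate: the crude count, in which each of the at most $\binom{n}{2}$ secants covers at most $q-1$ points off $S$, gives $\binom{n}{2}(q-1)\ge q^2+q+1-n$, and one checks that for $q\ge 6$ this inequality is already satisfied at $n=\sqrt{2q}+1$, so it only yields $n\gtrsim\sqrt{2q}$. Your pencil count at a point $P\in S$ is exactly the extra leverage: the secants through $P$ cover $1+r q$ points, the remaining $q(q+1-r)$ points are all off $S$ and hence must lie on secants missing $P$, each such secant meets the $q+1-r$ tangent/external lines through $P$ in at most $q+1-r$ points, so there are at least $q$ secants off $P$ and $\binom{n}{2}\ge\#\{\text{secants}\}\ge q+r_P$. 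Combined with the averaging bound $r_{\max}\ge 2(q+1)/n$ this gives $\binom{n}{2}\ge q+2(q+1)/n$, which indeed fails for every $n\le\sqrt{2q}+1$ since $4(q+1)>2q+\sqrt{2q}$; the monotonicity remark makes the conclusion strict, as stated.

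Two small points should be made explicit to make the write-up airtight. First, the division by $q+1-r$ tacitly assumes $r\le q$; if $r_P=q+1$ for some $P$ then the $q+1$ secants through $P$ carry $q+1$ distinct further points of $S$, so $n\ge q+2>\sqrt{2q}+1$ and this case is trivial---one sentence disposes of it. Second, the starting point of the averaging, $\#\{\text{secants}\}\ge q+1$, needs the (easy) observation that $n\ge 2$ and hence $r_P\ge 1$ for every $P\in S$. Neither is a real gap.
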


 This estimate is also known as the Lunelli-Sce bound for complete arcs.

If $q$ is a square and the plane is Desarguesian, then the existence of saturating sets with the same order of magnitude up to a constant factor is known due to Boros, Sz\H onyi and Tichler \cite{Szonyi}.

\begin{prop}\cite{Szonyi, Ughi}     The union of three non-concurrent Baer sublines  in a Baer subplane from a Desarguesian plane of square order is a
saturating set of size $3\sqrt{q}$.
\end{prop}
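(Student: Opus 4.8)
The plan is to verify the two defining features separately: the cardinality $3\sqrt{q}$ and the saturating property $\left\langle S,S\right\rangle=\cP$. Write $r=\sqrt{q}$, let $B\cong\PG(2,r)$ be the Baer subplane, and let $S=\ell_1\cup\ell_2\cup\ell_3$ where $\ell_1,\ell_2,\ell_3$ are three lines of $B$ with no common point. For the size I would use inclusion--exclusion: each $\ell_i$ carries $r+1$ points, each pair $\ell_i\cap\ell_j$ meets in a single point, and non-concurrence makes the three vertices distinct while ruling out a triple point, so $|S|=3(r+1)-3=3r=3\sqrt{q}$.

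The combinatorial core is a single observation: every line $m$ of $B$ meets the triangle $S$ in at least two points. Indeed, if $m$ coincides with some $\ell_i$ this is immediate, since then $m$ contains $r+1\ge 2$ points of $S$. Otherwise, inside $B\cong\PG(2,r)$ the line $m$ meets each $\ell_i$ in exactly one point $Q_i=m\cap\ell_i$; two of the $Q_i$ can coincide only at a vertex $\ell_i\cap\ell_j$, and all three coincide only if $\ell_1,\ell_2,\ell_3$ were concurrent. Hence at least two of $Q_1,Q_2,Q_3$ are distinct, so $|m\cap S|\ge 2$ in every case.

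To pass from lines of $B$ to points of $\PG(2,q)$, I would recall the standard dichotomy for a Baer subplane: every line of $\PG(2,q)$ meets $B$ either in exactly one point or in exactly $\sqrt{q}+1$ points, and in the latter case the intersection is precisely a line of $B$. I then claim that through any point $P$ there passes such a long secant. If $P\in B$ this is clear, since each of the $r+1$ lines of $B$ through $P$ qualifies; if $P\notin B$, then distributing the $q+\sqrt{q}+1>q+1$ points of $B$ among the $q+1$ lines through $P$ forces, by pigeonhole, some line to contain at least two—hence, by the dichotomy, exactly $\sqrt{q}+1$—points of $B$. Either way there is a line $\widehat m$ through $P$ whose trace on $B$ is a full line $m$ of $B$.

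Finally I would combine the two steps. Given $P\notin S$, take the long secant $\widehat m$ through $P$ together with its underlying line $m$ of $B$; by the core observation $m$, and hence $\widehat m$, contains two distinct points $s_1,s_2\in S$. Since $P\notin S$, the point $P$ is collinear with the distinct pair $s_1,s_2\in S$, that is, $P\in\left\langle S,S\right\rangle$. As $P$ was arbitrary, $\left\langle S,S\right\rangle=\cP$ and $S$ is saturating. The only genuinely non-elementary input here is the Baer intersection dichotomy together with the existence of a long secant through an external point; I expect this to be the main obstacle, while the triangle/non-concurrence bookkeeping is routine.
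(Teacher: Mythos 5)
Your proof is correct. The paper states this proposition as a cited result from Boros--Sz\H{o}nyi--Tichler and Ughi and gives no proof of its own, so there is nothing internal to compare against; your argument is the standard one. Note that your ``core observation'' (every line of the Baer subplane meets the triangle in at least two points) shows exactly that the triangle is a $2$-blocking set of the subplane, and the rest of your argument --- the $1$ or $\sqrt{q}+1$ intersection dichotomy plus the pigeonhole existence of a long secant through every point --- in fact proves the more general statement the paper records immediately afterwards, namely that \emph{any} point set meeting every line of a Baer subplane in at least two points is a saturating set of the ambient plane. All the individual steps (the inclusion--exclusion count $3(\sqrt{q}+1)-3=3\sqrt{q}$, the distinctness of the three vertices, and the collinearity of $P$ with two points of $S$ on the long secant) check out.
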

In fact,  this can be even generalized since any $2$-blocking set of a Baer-subplane,  which meets every line in at least $2$ points, provides a saturating set of the ground plane. Thus if $q$ is a $4$th power, this bound can be even improved to roughly $2\sqrt{q}$ due to Davydov et al. \cite{DG} and  Kiss et al. \cite{kiss}. Similar order of magnitude can be achieved if  $q$ is a $6$th power \cite{DG}. 

\begin{prop}\cite{DG, kiss}     The union of the point set of two disjoint  Baer subplane  of a Baer subplane from a Desarguesian plane is a
saturating set, of size  $2\sqrt{q} + 2\sqrt[4]{q}+2$. 
\end{prop}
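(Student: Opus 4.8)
The plan is to reduce the statement to a covering property inside the intermediate Baer subplane. Since we are in a Desarguesian plane, write $\Pi_q=\PG(2,q)$ with $q=r^4$, so that $\sqrt{q}=r^2$ and $\sqrt[4]{q}=r$. Let $B$ be a Baer subplane of $\Pi_q$, i.e.\ a copy of $\PG(2,r^2)$ on $q+\sqrt{q}+1$ points, and let $B_1,B_2\subset B$ be two \emph{disjoint} Baer subplanes of $B$, each a copy of $\PG(2,r)$ on $r^2+r+1=\sqrt{q}+\sqrt[4]{q}+1$ points. Put $S=B_1\cup B_2$; by disjointness $|S|=2\sqrt{q}+2\sqrt[4]{q}+2$, the claimed size, so it remains to prove $\langle S,S\rangle=\cP(\Pi_q)$. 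I would isolate the key step as a lemma already flagged in the text: \emph{any set $T\subseteq B$ that meets every line of $B$ in at least two points is saturating in $\Pi_q$}. The bulk of the work is this reduction, after which $S$ will satisfy the hypothesis almost for free.

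To prove the reduction, I would take $P\in\cP(\Pi_q)\setminus T$ and exhibit two points of $T$ collinear with $P$. If $P\in B$, choose any line $m$ of the subplane $B$ through $P$; by hypothesis $|m\cap T|\ge 2$, and the extension of $m$ to a line of $\Pi_q$ passes through $P$, so $P\in\langle T,T\rangle$. If $P\notin B$, I would use the defining incidence property of a Baer subplane — every line of $\Pi_q$ meets $B$ in $1$ or $\sqrt{q}+1$ points — together with a counting argument: the $q+1$ lines through $P$ partition the $q+\sqrt{q}+1$ points of $B$, so if $x$ of them are full subplane lines (meeting $B$ in $\sqrt{q}+1$ points) then $x(\sqrt{q}+1)+(q+1-x)=q+\sqrt{q}+1$, forcing $x=1$. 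Thus through each external point there is a unique line $\ell_P$ restricting to a line of $B$; since $T$ meets that line of $B$ in at least two points, $\ell_P$ carries two points of $T$ through $P$, and again $P\in\langle T,T\rangle$.

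Finally I would check that $S=B_1\cup B_2$ meets every line of $B$ in at least two points. Each $B_i$ is a Baer subplane of the plane $B=\PG(2,r^2)$, hence a blocking set of $B$: every line of $B$ meets $B_i$ in at least one point (in fact in $1$ or $r+1$ points). Applying this to $B_1$ and to $B_2$ gives a point of each on an arbitrary line $\ell$ of $B$, and because $B_1\cap B_2=\emptyset$ these two points are distinct; hence $|\ell\cap S|\ge 2$. The reduction lemma then makes $S$ saturating, completing the argument.

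The genuinely nontrivial input is not the covering argument, which is elementary counting, but the \emph{existence} of two disjoint Baer subplanes inside $B$: this is what forces $q$ to be a fourth power, and it is exactly the packing/partition construction supplied by the cited works of Davydov et al.\ and Kiss et al. I would therefore treat that existence as given. A secondary point to state carefully is the Baer incidence property, which is used twice at two different levels — once for $B$ inside $\Pi_q$ and once for each $B_i$ inside $B$ — since the whole proof rests on the fact that a Baer subplane is a (minimal) blocking set of its ambient plane.
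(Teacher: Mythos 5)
Your proposal is correct and follows exactly the route the paper itself indicates: the paper states this result as a citation to \cite{DG, kiss} without a formal proof, but its surrounding remark — that any $2$-blocking set of a Baer subplane (a set meeting every line of the subplane in at least two points) is a saturating set of the ambient plane — is precisely your reduction lemma, and your verification that two disjoint Baer subplanes of $B$ form such a $2$-blocking set is the intended application. Both your counting argument for the unique subplane line through an external point and your appeal to the cited constructions for the existence of the disjoint pair are sound.
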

(The existence of Baer-subplanes in  Baer subplanes of $\PG(2,q)$ tacitly implies that $q$ is a $4$th power.)

However, in the general case when the plane is not necessarily Desarguesian or the order is arbitrary, we only have much weaker results. Following the footprints of  
 Boros, Sz\H onyi and Tichler \cite{Szonyi}, Bartoli, Davydov, Giulietti, Marcugini and Pambianco \cite{Bartoli} obtained an estimate on the minimal size of a saturating  set in $\Pi_q$.
\begin{prop}[Bartoli et al.] \cite{Bartoli}   
$\min |S| \leq \sqrt{4(q+1)\ln{(q+1)}}+2$, if $S$ is a saturating set in  $\Pi_q$.
\end{prop}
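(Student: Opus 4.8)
The plan is to use the probabilistic deletion (alteration) method. First I would select a random candidate set $R$ by including each point of $\Pi_q$ independently with probability $p$, where $p$ is a parameter to be optimized later. Since $R$ need not be saturating, I would then repair it by adjoining every point that fails to be covered by a secant of $R$; the resulting set is saturating by construction, and the whole task reduces to bounding its expected cardinality.

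The key computation is the probability that a fixed point $P$ is left uncovered, i.e.\ that $P\notin R$ and no secant of $R$ passes through $P$. The $q+1$ lines through $P$ partition the remaining $q^2+q$ points into $q+1$ classes of size $q$, so the restrictions of $R$ to these classes are mutually independent. The point $P$ becomes covered as soon as one of these lines carries at least two points of $R$; writing $g(p)=(1-p)^{q}+qp(1-p)^{q-1}=(1-p)^{q-1}\bigl(1+(q-1)p\bigr)$ for the probability that a single line through $P$ meets $R$ in at most one point, one obtains
\[
\Pr[P\text{ uncovered}] = (1-p)\,g(p)^{q+1}.
\]
Summing over all $q^2+q+1$ points and adding the expected number of uncovered points to $\mathbb{E}|R|=p(q^2+q+1)$, linearity of expectation produces a saturating set whose expected size is at most $p(q^2+q+1)+(q^2+q+1)(1-p)g(p)^{q+1}$, and some outcome attains at most this value.

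It then remains to choose $p$ and estimate. I would set $p\approx 2\sqrt{(q+1)\ln(q+1)}\big/(q^2+q+1)$, so that the main term $p(q^2+q+1)$ equals $\sqrt{4(q+1)\ln(q+1)}$. Using $(1-p)^{q-1}\le e^{-(q-1)p}$ together with $\ln(1+x)\le x-\tfrac{x^2}{2}+\tfrac{x^3}{3}$ gives $g(p)\le \exp\!\bigl(-\tfrac12((q-1)p)^2+\tfrac13((q-1)p)^3\bigr)$, whence $g(p)^{q+1}$ is dominated by $e^{-(q+1)((q-1)p)^2/2}$. For the chosen $p$ the exponent is essentially $2\ln(q+1)$, the factor $(q^2+q+1)/(q+1)^2$ stays below $1$, and the cubic correction is $o(1)$, so the error term $(q^2+q+1)(1-p)g(p)^{q+1}$ is bounded by the constant $2$. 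Combining the two contributions yields the claimed bound $\min|S|\le \sqrt{4(q+1)\ln(q+1)}+2$.

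The main obstacle I expect is precisely this last estimate: squeezing the expected number of uncovered points down to the additive constant $2$ while keeping the leading term at exactly $\sqrt{4(q+1)\ln(q+1)}$ requires tracking the cubic term in the expansion of $\ln g(p)$ and the ratio $(q^2+q+1)/(q+1)^2$, rather than working only to leading order. The probabilistic setup and the independence of the pencil of lines through $P$ are routine; the delicate part is the analytic optimization that pins down the constants.
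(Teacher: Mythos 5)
Your argument is sound: the formula $\Pr[P\ \text{uncovered}]=(1-p)\,g(p)^{q+1}$ with $g(p)=(1-p)^{q-1}(1+(q-1)p)$ agrees exactly with the main term of the paper's expression (2.1) for $\mathbb{E}(Y)$ (the paper writes it as $(1-p)^{q^2+q+1}\bigl(1+q\tfrac{p}{1-p}\bigr)^{q+1}$, which factors into the same quantity), and the alteration step plus first moment method is the standard way to conclude. Be aware, though, that the paper does not prove this proposition --- it is quoted from Bartoli et al.\ \cite{Bartoli} --- so the relevant comparison is with the paper's Section 2 proof of the stronger Theorem \ref{main}. That proof runs on the identical probabilistic chassis but differs in two ways that together account for the improvement from $\sqrt{4}$ to $\sqrt{3}$: first, instead of adjoining \emph{every} uncovered point, it invokes Lemma \ref{felezes} to repair the set with only $\lceil|R|/2\rceil$ additional points; second, and more importantly, it does not insist that the expected number of uncovered points be $O(1)$. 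Since the leading term is $\Theta(\sqrt{q\ln q})$, one can afford an error term as large as $O(\sqrt{q})$, and requiring only $\mathbb{E}(Y)=O(\sqrt q)$ relaxes the exponent condition from $c^2/(2q)\geq 2\ln q$ to $c^2/(2q)\geq \tfrac32\ln q$, i.e.\ from $c=\sqrt{4q\ln q}$ down to $c=\sqrt{3q\ln q}$. Your insistence on driving the error term below the additive constant $2$ is precisely what locks you into the constant $\sqrt 4$; this is the observation the paper exploits. Your estimate of the error term itself is correct but could be stated more carefully for small $q$: the exponent is $2\ln(q+1)\cdot\bigl(\tfrac{q^2-1}{q^2+q+1}\bigr)^2$, which falls slightly short of $2\ln(q+1)$, so the uncovered-point count comes out as $1+O\bigl((\ln q)^{3/2}q^{-1/2}\bigr)$ rather than being trivially below $2$; for large $q$ this is fine, as you anticipate.
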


Our main theorem improves the constant term to $\sqrt{3}$.

\begin{theorem}
\label{main}   
$\min |S| \leq (\sqrt{3}+o(1))\sqrt{q\ln{q}}$,  if $S$ is a saturating set in  $\Pi_q$.
\end{theorem}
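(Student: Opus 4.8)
The plan is to use the probabilistic \emph{alteration} (semi-random) method: first choose a sparse random point set whose secants already cover all but a negligible collection of points, and then repair the few leftover points simply by adjoining them to the set. First I would include each point of $\Pi_q$ independently with probability $p$, obtaining a random set $S_0$, where $p$ is tuned so that $p\,|\cP|\approx\sqrt{3q\ln q}$, i.e.\ $p=\sqrt{3\ln q}\,q^{-3/2}$. Fix a point $P$. The $q+1$ lines through $P$ meet pairwise only in $P$, so the sets $\ell\setminus\{P\}$ (over $\ell\ni P$) are pairwise disjoint and partition $\cP\setminus\{P\}$. Hence the events ``$\ell$ carries at most one point of $S_0$ besides $P$'' are mutually independent, each of probability $f(p):=(1-p)^q+qp(1-p)^{q-1}=\Pr[\mathrm{Bin}(q,p)\le1]$. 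Since $P$ is unsaturated precisely when \emph{every} line through it has at most one point of $S_0$, we obtain $\Pr[P\ \text{unsaturated}]=f(p)^{q+1}$.

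The heart of the estimate is the asymptotics of this $(q{+}1)$-st power. Writing $\ln f(p)=(q-1)\ln(1-p)+\ln\!\big(1+(q-1)p\big)$ and expanding (valid since $qp\to0$ for our $p$), the linear terms cancel and one finds $\ln f(p)=-\tfrac12 q(q-1)p^2+O(q^3p^3)$, whence
\[
f(p)^{q+1}=\exp\!\Big(-\tfrac12 q^3p^2\,(1+o(1))\Big)=\exp\!\Big(-\tfrac{(p\,|\cP|)^2}{2q}\,(1+o(1))\Big).
\]
With $p\,|\cP|=c\sqrt{q\ln q}$ this equals $q^{-c^2/2+o(1)}$, so the expected number of unsaturated points is $|\cP|\,f(p)^{q+1}=q^{\,2-c^2/2+o(1)}$. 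The constant $\sqrt3$ is now forced: taking $c=\sqrt3$ makes this $q^{1/2+o(1)}=o(\sqrt{q\ln q})$, whereas any $c<\sqrt3$ lets the repair set dominate, and the crude union bound would need $c=2$.

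Finally I would combine the two phases by linearity of expectation, which sidesteps any concentration argument. Let $U$ be the set of points lying outside $S_0$ that are unsaturated; then $S_0\cup U$ is \emph{always} a saturating set, since any point outside it lies outside $S_0$ and is therefore saturated by $S_0\subseteq S_0\cup U$. Consequently
\[
\mathbb{E}\big[\,|S_0\cup U|\,\big]\le \mathbb{E}|S_0|+\mathbb{E}|U|\le p\,|\cP|+|\cP|\,f(p)^{q+1}=\big(\sqrt3+o(1)\big)\sqrt{q\ln q},
\]
so some outcome yields a saturating set of at most this size, which is the theorem.

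I expect the main technical obstacle to be the second step: controlling the $(q{+}1)$-st power of the binomial tail tightly enough that the repair term is genuinely of lower order, i.e.\ verifying the cancellation of the linear terms and the negligibility of the $O(q^3p^3)$ error, since it is exactly this cancellation that pins the constant at $\sqrt3$. Refining the crude repair ``add all of $U$'' down to the explicit additive term $\lceil(\sqrt q+1)/2\rceil$ of the abstract should require a more economical completion in which the adjoined points are paired so that each saturates two leftover points; this is also the point at which the announced reformulation via transversals of uniform multiple-intersecting hypergraphs enters, providing an algorithmic derivation of the same bound.
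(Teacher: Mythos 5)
Your proposal is correct and follows essentially the same route as the paper's first proof: random inclusion with $p\,|\cP|\approx\sqrt{3q\ln q}$, the same per-point failure probability computation (the paper's expression $(1-p)^{q^2+q+1}\left(1+q\frac{p}{1-p}\right)^{q+1}$ is exactly your $(1-p)f(p)^{q+1}$), and an alteration step. The only differences are cosmetic --- you repair by adjoining all unsaturated points, whereas the paper pairs them up and adds one point per pair (Lemma \ref{felezes}), which affects only the sharper additive term $\lceil(\sqrt{q}+1)/2\rceil$ and not the stated bound --- and, as you anticipate, one should check that the additive error in $(q+1)\ln f(p)$ is $O(q^4p^3)=o(1)$, so that $\mathbb{E}|U|=O(\sqrt{q})$ rather than merely $q^{1/2+o(1)}$, which indeed holds.
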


In fact, we prove the exact result $ \min |S| \leq \lceil\sqrt{3q\ln{q}}\rceil+ \lceil(\sqrt{q}+1)/2\rceil$.

Computer searches suggest that for Galois planes, this bound is still not sharp and the correct order of magnitude is probably $O(\sqrt{q})$, see \cite{DMP}. However, we will see in Section 4 that our estimate might be sharp in general, if we cannot built on any algebraic structure of the plane.
We focus mainly on the order of magnitude of $\min |S|$, partly because it has been determined for $q$ small in $PG(2,q)$, see \cite{B1, B2} and the references therein. 
In particular, Bartoli et al. showed  an  upper bound slightly above $(\sqrt{3})\sqrt{q\ln{q}}$ for the smallest size of complete arcs  in $PG(2,q)$ under a certain probabilistic conjecture, and their computer search results suggest that this is indeed the right order of magnitude \cite{B1, B2}. These results also support the assumption that Theorem \ref{main} might be sharp for general planes. 

The paper is built up as follows. In Section 2, we present the first proof of Theorem \ref{main}, applying a refined version of the first moment method, where an almost suitable structure is proved via random argument, which can be fixed. This idea appeared first in Erd\H{o}s' work, who proved this way the existence of dense, complete bipartite graph free and dense, even cycle free graphs, see e.g. in \cite{Ball} (Chapter 6). In Section 3, we proceed by showing that an advanced greedy-type algorithm   also provides a saturating set of this size. Finally, in Section 4 we analyze the above approaches and point out their connection to hypergraph cover problems, which suggests that probably in general  (non-desarguesian) projective planes it would be hard to improve the order of magnitude. Note that similar connection between geometric problems and transversals in hypergraph appeared before several times \cite{Alon1}.   We finish  with a number of open problems.

\section{Probabilistic argument  - First proof  }

\begin{proof}[Proof of Theorem \ref{main}]

The key idea is  the following: we put in every point to our future saturating set $S^*$ with a given probability $p:=p(q)$ determined later on, independently from the other points. Then we complete it to obtain a saturating set via Lemma \ref {felezes}. 

\begin{lemma}\label{felezes}
Consider a set of points $S^*$ and the corresponding set $R$ containing those points which are not determined by the lines of $S^*$. One can add at most $\lceil|R|/2\rceil$ points to $S^*$ so that the resulting set is a saturating set.
\end{lemma}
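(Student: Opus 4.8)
The plan is to reduce the task to covering only the undetermined points and then to cover them two at a time, spending a single new point per pair; this is precisely where the factor $1/2$ originates. Write $R=\cP(\Pi_q)\setminus\left\langle S^*,S^*\right\rangle$ for the set of undetermined points. If $X\notin R$, then $X$ already lies on a secant of $S^*$, and since enlarging the point set only creates further secants, such an $X$ stays covered no matter what we add. Hence it suffices to add points so that every point of $R$ ends up either inside the new set or on a secant of it. In the intended application $S^*$ is large, so we may assume $|S^*|\ge 2$; the degenerate cases are immediate, since then $R$ is all of $\cP(\Pi_q)$ and one may simply adjoin any (small) saturating set, whose size is far below $\lceil|R|/2\rceil$.

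Next I would fix two distinct points $A,B\in S^*$ and pair up the points of $R$ arbitrarily, leaving one point unpaired if $|R|$ is odd. For a pair $\{P,Q\}\subseteq R$ the idea is to adjoin the single point
\[ Z:=\left\langle A,P\right\rangle\cap\left\langle B,Q\right\rangle, \]
the unique intersection of the two lines. Then $A,Z$ lie in the enlarged set and are collinear with $P$, so $P$ is covered by the secant $\left\langle A,Z\right\rangle$; symmetrically $Q$ is covered by $\left\langle B,Z\right\rangle$. Thus one new point disposes of an entire pair, and the unpaired leftover (if any) is handled by adding it to the set directly. Altogether at most $\lceil|R|/2\rceil$ points are adjoined.

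The step that really needs care — and the only genuine obstacle — is to verify that this construction never degenerates, i.e. that $\left\langle A,Z\right\rangle$ and $\left\langle B,Z\right\rangle$ really are secants through two \emph{distinct} points of the enlarged set. All the required nondegeneracies follow from the hypothesis $P,Q\in R$. Indeed, were the lines $\left\langle A,P\right\rangle$ and $\left\langle B,Q\right\rangle$ equal, that common line would contain both $A,B\in S^*$ while passing through $P$, so $P$ would be determined — a contradiction; hence the two lines meet in a unique $Z$. Likewise $Z=A$ would force $A\in\left\langle B,Q\right\rangle$, making $\left\langle B,Q\right\rangle$ a secant through $Q$, again impossible; so $Z\neq A$, and symmetrically $Z\neq B$. (The harmless coincidence $Z=P$ merely places $P$ itself into the set.) With these checks in hand, every point lying outside the final set $S^*$ together with the added points is covered — the points of $R$ by the construction, and all others by the inherited secants of $S^*$ — so the resulting set is saturating, which completes the argument.
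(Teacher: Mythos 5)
Your proposal is correct and takes essentially the same route as the paper: pair up the points of $R$ and, for each pair, adjoin the intersection of the two lines joining the pair to two distinct points of $S^*$, so that one new point saturates both members of the pair. The only difference is one of detail — the paper dismisses the nondegeneracy as clear, while you explicitly verify that the two lines are distinct and that their intersection differs from $A$ and $B$, which is exactly the step being glossed over.
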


\begin{proof}
Pair up the points of $R$ and for each pair $\{x, x'\}$, choose the intersection of $\left\langle x, s\right\rangle$ and $\left\langle x,' s'\right\rangle$ for two different points $s, s'$ in $S^*$. Clearly, the addition of the intersection makes the points $x, x'$ determined.
\end{proof}

Clearly if $X$ is random variable counting the number of points in $S^*$, we have $\mathbb{E}(X)=p(q^2+q+1)$. 
Then if $Y$ is random variable counting the number of points not in $\left\langle S^*, S^* \right\rangle$ we get

\begin{equation}\label{eq:Y}
\begin{aligned}
\mathbb{E}(Y)&=(q^2+q+1)\mathbb{P}(\mbox{a given point is not determined  })\\&= (q^2+q+1)(1-p)^{(q^2+q+1)}\left(\frac{p}{1-p}+ \left(1+q\frac{p}{1-p}\right)^{q+1}  \right).
\end{aligned}
\end{equation}

Indeed, in order to obtain that a given point is not determined, either a point is chosen, thus no further points may appear; or  it is not chosen and on every line on the point at most one point is chosen. The expression we get this way simplifies to (\ref{eq:Y}).

It is easy to see that $(q^2+q+1)(1-p)^{(q^2+q+1)}\left(\frac{p}{1-p} \right)$ does not contribute to the main term of $\mathbb{E}(Y) $, so we omit it.

We want to choose the value of $p:=p(q)$ in such a way that the sum $\mathbb{E}(X)+\lceil\frac{1}{2}\mathbb{E}(Y)\rceil$ is minimized, which would provide a suitable saturating set via the first moment method, in view of Lemma \ref{felezes}.  This yields $p\approx \frac{1}{2}\exp{\left(\frac{2p}{p-2}(q^2+q+1)\right)}\exp{\left((q+1)\cdot\frac{2\frac{pq}{1-p}}{\frac{pq}{1-p}+2}  \right)}$ if we apply the approximation forms via Taylor's theorem

$$ (1+1/x)^{x+0.5}=e\cdot\left(1+ \frac{1}{12x^2}-O\left(\frac{1}{x^3}\right)\right)\ \ \ (x\rightarrow \infty) \Leftrightarrow 1+z \approx \exp{\left(\frac{2z}{z+2}\right)}(1+O(z^3)) \ \ \ (z\rightarrow 0),$$  \mbox{for}  $ z=\frac{pq}{1-p}$,  
as $\left(1+ \frac{z^2}{12}-O\left({z^3}\right)\right)^{\frac{2z}{z+2}}=(1+O(z^3))$ if $z\ll 1$, and

$$ (1-1/x)^{x-0.5}= e^{-1}\cdot\left(1-\frac{1}{12x^2}-O\left(\frac{1}{x^3}\right)\right)\ \ \ (x\rightarrow \infty) \Leftrightarrow 1-p \approx \exp{\left(\frac{2p}{p-2}\right)}(1-O(p^3)) \ \ \ (p\rightarrow 0).$$ 

From this, one can derive

 $$\frac{1}{2}\mathbb{E}(Y) =\frac{1}{2}(q^2+q+1)\exp{\left(\frac{2p}{p-2}(q^2+q+1)\right)}\cdot\Delta_1^{q^2+q+1}\exp{\left((q+1)\cdot\frac{2\frac{pq}{1-p}}{\frac{pq}{1-p}+2}  \right)}\cdot\Delta_2^{q+1},$$ 
where $\Delta_1= (1-O(p^3))$ and $\Delta_2=(1+O(z^3)) $ denote the error terms with $ z=\frac{pq}{1-p}$.

By simplifying the main term in $\mathbb{E}(Y)$, we get
\begin{equation}\label{eq:Y2}
\begin{aligned}
 \frac{1}{2}(q^2+q+1)\cdot\exp{\left( \frac{2p}{p-2}(q^2+q+1)\right)}\exp{\left((q+1)\cdot\frac{2\frac{pq}{1-p}}{\frac{pq}{1-p}+2}  \right)} &=\\
\frac{1}{2}(q^2+q+1)\cdot \exp{\left( \frac{2p}{p-2} + 2p(q^2+q)\cdot(\frac{1}{p-2}+\frac{1}{pq+2(1-p)}) \right)} &\approx\\
 \frac{1}{2}(q^2+q+1)\cdot \exp{\left( \frac{2p}{p-2} + 2pq(q+1)\cdot\frac{-p(q-1)}{4} \right)} 
  &\approx\\ \frac{1}{2}(q^2+q+1)\cdot  \exp{\left(-\frac{1}{2}q(q^2-1)p^2\right)},
\end{aligned}
\end{equation}
after we omit the smaller order terms during the approximation.

The  calculation above leads to the choice $p=\frac{\sqrt{3}\sqrt{q \ln{q}}}{q^2+q+1}$, which in fact implies that the main term in
$\mathbb{E}(Y)$ (\ref{eq:Y2}) equals to $O(\sqrt{q})$, and it is easy to check that the same holds for the error terms coming from $\Delta_1$ and $\Delta_2$. This in turn provides the existence of a saturating set of size  $$\mathbb{E}(X)+\lceil\frac{1}{2}\mathbb{E}(Y)\rceil=(1+o(1))\sqrt{3q\ln{q}}.$$
\end{proof}

\section{Algorithmic approach - Second proof}\label{algo}

Below we will present an algorithm to choose the point set of a saturating set $S$.  We start with an empty set $S_0$ in the beginning, and increase its cardinality by adding one point in each step.

\begin{nota}
In the $i$th step, we  denote the current set $S_i$ that will be completed to a saturating set, $D_i$  denotes the points of the plane outside $S_i$ which are determined by $S_i$ and $R_i$ denotes the set of points not in $S_i\cup D_i$. For a point set $H$, $\sigma{(H)}$ denotes the set of lines skew to $H$. 

The {\em benefit} of a point $b(P)$ (in step $i$) is the amount of points from $R_i$ which would become determined by the point set $S_i \cup \{P\}$, that is, $b(P)= |\left\langle P, S_i\right\rangle \cap R_i |$. 

 \end{nota}

To obtain $D_{i+1}$ from $D_i$, we would like to add a point to $S_i$ which has the largest benefit. Consider all the lines skew to $S_i$, and choose one of them for which the intersection with $R_i$ is minimal. Adding up the benefits of the points of this line $\ell^*$, we get

\begin{equation}\label{eq:1}
 \sum_{P \in \ell^*} b(P) = |R_i \cap \ell^*|+i\cdot|R_i\setminus \ell^*|. 
\end{equation}

Indeed, the double counting counts any point $P\in R_i \cap \ell^*$ exactly once as these  points become determined only if we choose $P$ itself from $ \ell^*$. On the other hand, any point $Q$ of $R_i$ outside $\ell^*$ will be determined by adding a point from $ \ell^* \cap \left\langle Q, S_i\right\rangle$. This latter point set is of cardinality $i$ as $Q$ was not determined before.

\begin{lemma}\label{metszet} $$ \min_{\ell \in \sigma(S_i)} |R_i \cap \ell|\leq \frac{|R_i|}{q}.    $$
\end{lemma}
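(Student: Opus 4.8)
The plan is to bound the minimum by the \emph{average} of $|R_i\cap\ell|$ taken over all skew lines $\ell\in\sigma(S_i)$, and to control that average by two double counts: one evaluating the numerator $\sum_{\ell\in\sigma(S_i)}|R_i\cap\ell|$ exactly, and one giving a lower bound on the denominator $|\sigma(S_i)|$. Throughout I assume $1\le i\le q$ (the case $i=0$ is immediate, since then every line is skew).

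First I would determine how many skew lines pass through a fixed undetermined point $P\in R_i$. Since $P$ lies on no secant of $S_i$, the $i$ lines $\left\langle P,s\right\rangle$ with $s\in S_i$ are pairwise distinct (if two coincided, that common line would join two points of $S_i$ while passing through $P$, i.e. a secant through $P$), and each carries exactly one point of $S_i$; moreover every non-skew line through $P$ arises this way. Hence exactly $i$ of the $q+1$ lines through $P$ meet $S_i$, so $P$ lies on precisely $q+1-i$ skew lines. Summing over $P\in R_i$ gives
\begin{equation*}
\sum_{\ell\in\sigma(S_i)}|R_i\cap\ell| \;=\; |R_i|\,(q+1-i).
\end{equation*}

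The main obstacle is the lower bound $|\sigma(S_i)|\ge q(q+1-i)$, equivalently that $S_i$ meets at most $qi+1$ lines. I would classify the lines by the number $j$ of points of $S_i$ they carry, writing $n_j$ for the number of such lines. Counting incidences and pairs yields the identities $\sum_{j\ge1} j\,n_j=i(q+1)$ and $\sum_{j\ge1}\binom{j}{2}n_j=\binom{i}{2}$. Writing the number of non-skew lines as $\sum_{j\ge1}n_j=i(q+1)-\sum_{j\ge1}(j-1)n_j$, I would exploit that any line carries at most $i$ points of $S_i$, so $j\le i$ and hence $j-1=\tfrac{2}{j}\binom{j}{2}\ge\tfrac{2}{i}\binom{j}{2}$; summing gives $\sum_{j\ge1}(j-1)n_j\ge\tfrac{2}{i}\binom{i}{2}=i-1$. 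Therefore $S_i$ meets at most $i(q+1)-(i-1)=qi+1$ lines, and the number of skew lines is at least $(q^2+q+1)-(qi+1)=q(q+1-i)$.

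Combining the two steps, the minimum is at most the average:
\begin{equation*}
\min_{\ell\in\sigma(S_i)}|R_i\cap\ell| \;\le\; \frac{\sum_{\ell\in\sigma(S_i)}|R_i\cap\ell|}{|\sigma(S_i)|} \;=\; \frac{|R_i|\,(q+1-i)}{|\sigma(S_i)|} \;\le\; \frac{|R_i|\,(q+1-i)}{q(q+1-i)} \;=\; \frac{|R_i|}{q}.
\end{equation*}
I expect the skew-line count to be the delicate point; it is worth noting that the bound is tight exactly when $S_i$ is collinear, in which case every skew line meets $R_i$ in precisely $q$ points, matching the estimate.
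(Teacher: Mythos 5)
Your proposal is correct and follows essentially the same route as the paper: bound the minimum by the average of $|R_i\cap\ell|$ over skew lines, using that each point of $R_i$ lies on exactly $q+1-i$ skew lines and that $|\sigma(S_i)|\ge q(q+1-i)$. The only difference is that you supply a detailed proof of the skew-line count (which the paper states without proof), so there is nothing to fix.
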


\begin{proof} There are at least $(q^2+q+1) - iq-1 = q(q+1-i)$ skew lines to any set of $i$ points. Every point of $R_i$ is appearing on exactly $(q+1-i)$ lines from $\sigma(S_i)$, hence 

$$ \sum_{\ell \in \sigma(S_i)}|R_i \cap \ell|= |R_i|(q+1-i),$$ thus the statement follows.
\end{proof}

\begin{prop}\label{fo} $$|R_{i+1}| \leq |R_i|\left(1-\frac{i}{q+2} \right) $$ if we add the point having the largest benefit to $S_i$.
\end{prop}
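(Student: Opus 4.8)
The plan is to reduce the whole statement to a single lower bound on the benefit of the greedily chosen point. Write $P^*$ for the point of largest benefit that the algorithm adds in step $i$. A point $Q\in R_i$ leaves the undetermined set precisely when it becomes collinear with $P^*$ and a point of $S_i$, i.e.\ when $Q\in\langle P^*,S_i\rangle\cap R_i$; and $P^*$ itself, if it was undetermined, lies on every line through it and is therefore already counted in $b(P^*)$. Hence adding $P^*$ gives the exact identity $|R_{i+1}|=|R_i|-b(P^*)$, and it suffices to prove $b(P^*)\ge \frac{i}{q+2}\,|R_i|$.

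To bound $b(P^*)$ from below I would avoid analysing an individual optimal point and instead run an averaging argument over the line $\ell^*$ appearing in (\ref{eq:1}), namely a skew line to $S_i$ minimising $|R_i\cap\ell|$. Since $P^*$ has the largest benefit among \emph{all} points, in particular $b(P^*)$ is at least the average of $b(P)$ over the $q+1$ points of $\ell^*$. Combined with the counting identity (\ref{eq:1}) this yields $b(P^*)\ge \frac{1}{q+1}\bigl(|R_i\cap\ell^*|+i\,|R_i\setminus\ell^*|\bigr)$.

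Next I would substitute $|R_i\setminus\ell^*|=|R_i|-|R_i\cap\ell^*|$ and collect terms into $\frac{1}{q+1}\bigl(i\,|R_i|-(i-1)\,|R_i\cap\ell^*|\bigr)$. Because the coefficient of $|R_i\cap\ell^*|$ is negative, an \emph{upper} bound on $|R_i\cap\ell^*|$ gives a lower bound on the whole expression, and this is exactly what Lemma \ref{metszet} supplies: the minimality of $\ell^*$ forces $|R_i\cap\ell^*|\le |R_i|/q$. Substituting gives $b(P^*)\ge \frac{|R_i|}{q+1}\cdot\frac{i(q-1)+1}{q}$.

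What remains is a purely algebraic comparison of $\frac{i(q-1)+1}{q(q+1)}$ with the target factor $\frac{i}{q+2}$; clearing denominators, the desired inequality collapses to $q+2-2i\ge 0$. The only subtlety to flag is therefore that the estimate holds once $i\le (q+2)/2$, and I expect this to be the one point a careful reader will question. It is, however, harmless: the algorithm only reaches indices $i$ of order $\sqrt{q\ln q}$, far below $q/2$, so the bound is valid throughout the entire run and the recursion $|R_{i+1}|\le |R_i|\bigl(1-\tfrac{i}{q+2}\bigr)$ can be iterated freely afterwards. I would state this range restriction explicitly rather than leave the proposition unconditional.
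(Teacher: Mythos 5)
Your proof follows the paper's argument exactly: identify $|R_{i+1}|=|R_i|-b(P^*)$, lower-bound $b(P^*)$ by the average benefit over the minimal skew line $\ell^*$ via (\ref{eq:1}), control $|R_i\cap\ell^*|$ by Lemma \ref{metszet}, and compare the resulting factor with $1-\frac{i}{q+2}$. The one place you diverge works in your favor: the paper drops the additive $+1$ from $i(q-1)+1$ and then asserts $|R_i|\left(1-\frac{i(q-1)}{q(q+1)}\right)<|R_i|\left(1-\frac{i}{q+2}\right)$, which as written goes the wrong way since $(q-1)(q+2)=q^2+q-2<q(q+1)$; retaining the $+1$ as you do, the comparison collapses to $q+2-2i\ge 0$, so the bound holds precisely for $i\le (q+2)/2$ --- a restriction the paper leaves implicit but which, as you correctly note, is harmless because the algorithm only runs for $k=\lceil\sqrt{3q\ln q}\rceil$ steps.
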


\begin{proof} 
We have $b(P)\geq \frac{1}{q+1}\left(|R_i \cap \ell^*|+i\cdot|R_i\setminus \ell^*| \right) $ 
for the point $P$ having the largest benefit on the skew line $\ell^*$ of minimal intersection with $R_i$, in view of Equation \ref{eq:1}.
Observe that $ |R_i \cap \ell^*|\leq \frac{|R_i|}{q}$, according to Lemma \ref{metszet}.
 By adding this point $P$ to $S_i$, we get $$|R_{i+1}|=|R_i|-b(P)\leq   |R_i| - \frac{ \frac{|R_i|}{q} + i(|R_i|-\frac{|R_i|}{q}) }{q+1}\leq |R_i|\left(1-\frac{i(q-1)}{q(q+1)} \right)  < |R_i|\left(1-\frac{i}{q+2} \right).$$
\end{proof}

\begin{lemma}\label{anal} $$\prod_{i=1}^{k:=\lceil\sqrt{3q\ln{q}}\rceil} \left(1-\frac{i}{q+2} \right) < {q}^{-3/2}.$$
\end{lemma}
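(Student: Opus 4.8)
The plan is to bound the product $\prod_{i=1}^{k}\left(1-\frac{i}{q+2}\right)$ by taking logarithms and converting it into a sum. Using the elementary inequality $\ln(1-x)\le -x$, valid for $0\le x<1$, I would write
\begin{equation*}
\ln\prod_{i=1}^{k}\left(1-\frac{i}{q+2}\right)=\sum_{i=1}^{k}\ln\left(1-\frac{i}{q+2}\right)\le -\sum_{i=1}^{k}\frac{i}{q+2}=-\frac{1}{q+2}\cdot\frac{k(k+1)}{2}.
\end{equation*}
The sum of the first $k$ integers is $\frac{k(k+1)}{2}$, so the whole task reduces to showing that this quantity exceeds $\frac{3}{2}\ln q$, since the claimed bound ${q}^{-3/2}=\exp\left(-\frac{3}{2}\ln q\right)$ would then follow immediately by exponentiating.

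Next I would substitute the value $k=\lceil\sqrt{3q\ln q}\rceil$. By the ceiling, $k\ge\sqrt{3q\ln q}$, so $k(k+1)\ge k^2\ge 3q\ln q$, and hence
\begin{equation*}
\frac{1}{q+2}\cdot\frac{k(k+1)}{2}\ge\frac{3q\ln q}{2(q+2)}.
\end{equation*}
The remaining point is that $\frac{3q\ln q}{2(q+2)}$ is slightly smaller than $\frac{3}{2}\ln q$ because of the $q+2$ rather than $q$ in the denominator, so the crude bound $k^2\ge 3q\ln q$ alone is not quite enough. To recover the lost factor I would use the extra $+1$ in $k(k+1)$ together with the ceiling slack: writing $k\ge\sqrt{3q\ln q}$ gives $k(k+1)\ge 3q\ln q+\sqrt{3q\ln q}$, and then
\begin{equation*}
\frac{k(k+1)}{2(q+2)}\ge\frac{3q\ln q+\sqrt{3q\ln q}}{2(q+2)},
\end{equation*}
whose numerator surplus $\sqrt{3q\ln q}$ comfortably dominates the deficit $3\ln q\cdot\frac{2}{q}$ coming from replacing $q+2$ by $q$, for all sufficiently large $q$.

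The main obstacle is thus purely this lower-order bookkeeping: verifying that the $+2$ in the denominator does not destroy the inequality, which amounts to checking $\frac{k(k+1)}{2(q+2)}\ge\frac{3}{2}\ln q$, i.e. that $k(k+1)\ge 3(q+2)\ln q$. Since $k(k+1)\ge 3q\ln q+\sqrt{3q\ln q}$ and $3(q+2)\ln q=3q\ln q+6\ln q$, the claim comes down to $\sqrt{3q\ln q}\ge 6\ln q$, equivalently $q\ge 12\ln q$, which holds for all $q$ large enough (and in particular for every prime power order of a projective plane beyond a small threshold). I would finish by noting the inequality is strict, so the product is strictly less than $q^{-3/2}$, and remark that the handful of tiny exceptional $q$ can be checked directly or absorbed into the $o(1)$ terms of Theorem~\ref{main}.
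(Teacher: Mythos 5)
Your argument is correct and takes a genuinely more elementary route than the paper's. The paper rewrites the product as $A_q(k)=\frac{(q+1)!}{(q+2)^k(q-k+1)!}$, applies Stirling's approximation, and then uses Taylor-type bounds of the form $(1+1/x)^{x+0.5}<e\left(1+\frac{1}{12x^2}\right)$ to control three separate multiplicative error terms before concluding $A_q(k)<e^{-(1+\epsilon)\frac{3}{2}\ln q}$. You instead bound each factor termwise via $\ln(1-x)\le -x$, which collapses the whole estimate to the single clean inequality $k(k+1)\ge 3(q+2)\ln q$; since $k\ge\sqrt{3q\ln q}$, this follows once $\sqrt{3q\ln q}\ge 6\ln q$, i.e.\ $q\ge 12\ln q$. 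This works because only an upper bound on the product is needed, and $\ln(1-x)\le -x$ loses nothing in the direction that matters, so your proof avoids all of the paper's error-term bookkeeping. What it costs is the threshold $q\ge 12\ln q$, which fails for $q\le 45$, so a few dozen small orders require direct verification; note, though, that the paper's asymptotic analysis with its unquantified $\epsilon(q)>0$ carries the same implicit limitation. One small slip in your prose: the deficit incurred by replacing $q$ with $q+2$ in the denominator corresponds to $6\ln q$ in the numerator comparison, not $6\ln q/q$; your final reduction to $\sqrt{3q\ln q}\ge 6\ln q$ is nevertheless the correct accounting, so the conclusion stands.
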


\begin{proof} Denote $\prod_{i=1}^{k} \left(1-\frac{i}{q+2} \right) = \frac{(q+1)!}{(q+2)^k (q-k+1)!}$ by $A_q(k)$. We apply Stirling's approximation   $$\frac{n!}{t!}< \frac{\sqrt{2\pi n}\left(\frac{n}{e}\right)^n}{ \sqrt{2\pi t}\left(\frac{t}{e}\right)^t}$$ for $t<n$ with $t= q-k+1$ and $n=q+1$.
This implies 
$$A_q(k)< \frac{\sqrt{(q+1)}}{\sqrt{(q-k+1)}} \cdot \frac{(q+1)^{q+1}}{(q+2)^k\cdot (q-k+1)^{q-k+1}\cdot e^k}.$$

One again we use the Taylor-form for the approximation $(1+1/x)^{x+0.5}<   e\cdot\left(1+ \frac{1}{12x^2}\right)\ \  (x~>~1)$,


 to obtain 

\begin{equation}\begin{aligned}\label{eq3} \left(1+\frac{k}{q-k+1}\right)^{q-k+1} =   \left(1+\frac{k}{q-k+1}\right)^{\left(\frac{q-k+1}{k}+1/2\right)k}  \left(1+\frac{k}{q-k+1}\right)^{-k/2} < \\
e^k\cdot  \left(1+  \frac{k^2}{12(q-k+1)^2}    \right)^k   \left(\frac{q-k+1}{q+1}\right)^{k/2}.
\end{aligned}
\end{equation}

Applying the approximation also for $(1-1/x)^{x-0.5}<~e^{-1}\cdot\left(1- \frac{1}{12x^2}\right)$ \ $(x\rightarrow \infty)$, we can simplify 

\begin{equation}\begin{aligned} \label{eq4}
 \left(\frac{q-k+1}{q+1}\right)^{k/2} \left(\frac{q+1}{q+2}\right)^k = \left( 1- \frac{(q+2)(k+2)+1}{(q+2)^2} \right)^{k/2}<  \left( 1- \frac{(k+2)}{(q+2)} \right)^{k/2}= \\
 \left( 1- \frac{(k+2)}{(q+2)} \right)^{ \left( \frac{q+2}{k+2}-0.5\right)\cdot k/2\cdot\left( \frac{q+2}{k+2}-0.5\right)^{-1}   }<
\left(  e^{-1}\cdot\left(1-\frac{(k+2)^2}{12(q+2)^2} \right)  \right) ^{\frac{k(k+2)}{2q-k+2}}. 
\end{aligned}
\end{equation}

In total, if we take into consideration both (\ref{eq3}) and (\ref{eq4}), we get that 

$$A_q(k)< exp\left(-\left(\frac{k(k+2)}{2q-k+2}\right)\right)\Delta_3(k),$$

where $\Delta_3(k)$ is the product of error terms $ \frac{\sqrt{(q+1)}}{\sqrt{(q-k+1)}}$, $ \left(1+  \frac{k^2}{12(q-k+1)^2}    \right)^k$ and\\ $\left(1-\frac{(k+2)^2}{12(q+2)^2} \right)^{\frac{k(k+2)}{2q-k+2}}.$

If one plugs in $k= \lceil\sqrt{3q\ln{q}}\rceil$, careful calculations on the error term imply that $$A_q(k)<e^{-\left((1+\epsilon)\frac{3}{2}\ln{q}\right)},$$
 where $\epsilon= \epsilon(q)>0$.
\end{proof}

 By  applying Proposition \ref{fo} successively $k= \lceil\sqrt{3q\ln{q}}\rceil$ times it follows from Lemma \ref{anal} that we end up with at most $\sqrt{q}+1$ points remaining in $R_k$. But note that these points can be covered by adding a further $\lceil(\sqrt{q}+1)/2\rceil$ points to the saturating set, via Lemma \ref{felezes},
hence this algorithm provides a saturating set of size $ \lceil\sqrt{3q\ln{q}}\rceil+ \lceil(\sqrt{q}+1)/2\rceil$.

\section{Connections with hypergraph coverings, applications and open problems}\label{fin}



To improve further the  bound of Theorem \ref{main} in the main term, one must ensure at least one of the followings.

\begin{itemize}
\item[(1)] Prove the existence of a point that has significantly more benefit then the others, for several steps.
\item[(2)] Prove a lemma, which provides a completion of almost saturating sets to obtain a saturating set with fewer points than Lemma \ref{felezes}. That might enable us to apply Lemma \ref{anal} until a smaller summation limit.
\end{itemize}

Concerning (1), observe that while avoiding lines $\ell$ which are secants of $S_i$ during the choice for the newly added point, it is not guaranteed that the new point won't lie eventually on a secant of $S_i$, so this method does not enable us to provide arcs. However, as these tangents does not contain points for $R_i$, the sum of the benefits on these tangent lines are certainly less than the sum determined in Equation \ref{eq:1}. Despite all this, the known constructions with $O(\sqrt{q})$ points on Galois planes of square order consist of point sets having large secant sizes --- but note that we heavily built on the algebraic (sub)structure.

Concerning (2), notice first that the constant  $\sqrt{3}$ in the bound (and also in the background of the first probabilistic proof) is explained by the application domain of the approaches, as the first phase of both methods lasted until the number of not saturated points decreases from $q^2$ to $O(\sqrt{q})$, so if one cannot exploit the structure of the not saturated points hence cannot obtain a good bound on the variance of the distribution of the benefit values, then Lemma \ref{anal} would provide an evidence that  $O(\sqrt{q})$ points won't form a saturating set in general. 

This problem is strongly connected to the theory of bounding the transversal number of certain hypergraphs.
Recall that for an $r$-uniform hypergraph $\mathcal{H}$, the covering number or \textit{transversal number} $\tau(\mathcal{H})$ is the the minimum cardinality of a set of vertices that intersects all edges of $\mathcal{H}$.

Consider a point set $S_0$ in the projective plane. Let $X(S_0)=\{x_1, \ldots, x_m\}$ be the set of points not saturated by $S_0$, that is, those points which are not incident to the lines determined by the point pairs of $S_0$.  Assign to each $x_i$ a set $H_i$ of $|S_0|(q-1)+1$ points which make $x_i$ saturated, i.e. $H_i=\left\langle x_i, S_0 \right\rangle\setminus S_0$.\\

Note that the intersection of these sets consists of many points.

\begin{lemma}
	$|H_i\cap H_j|= {|S_0|(|S_0|-1)}$ or  $|H_i\cap H_j|= {(|S_0|-1)(|S_0|-2)}+q$ for every $1\leq i< j\leq m$. 
\end{lemma}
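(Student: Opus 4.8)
The plan is to analyze the two sets through the pencils of lines that define them, and to trace the dichotomy in the statement to a single structural invariant: the number of points of $S_0$ lying on the line $\ell := \left\langle x_i, x_j\right\rangle$. Write $s := |S_0|$ and $S_0 = \{P_1, \ldots, P_s\}$. Since $x_i$ is unsaturated by $S_0$, no line through $x_i$ is a secant of $S_0$, so the lines $a_k := \left\langle x_i, P_k\right\rangle$ are pairwise distinct and meet only in $x_i$; likewise the lines $b_k := \left\langle x_j, P_k\right\rangle$ are distinct and meet only in $x_j$. (This incidentally re-derives $|H_i| = s(q-1)+1$, since the union of the pencil $a_1, \ldots, a_s$ has $sq+1$ points, of which exactly one per line lies in $S_0$.) The crucial observation is that $\ell$ is not a secant of $S_0$ either, so $|\ell \cap S_0| \in \{0, 1\}$, and these two possibilities will produce the two values in the statement.

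First I would rewrite
$$\left\langle x_i, S_0\right\rangle \cap \left\langle x_j, S_0\right\rangle = \bigcup_{k,l} \left(a_k \cap b_l\right),$$
and decide when a term is a whole line instead of a point: $a_k = b_l$ forces both $x_i$ and $x_j$ onto one common line, which must be $\ell$, whence $P_k, P_l \in \ell \cap S_0$; as $\ell$ carries at most one point of $S_0$ this happens only if $\ell \cap S_0 = \{P_1\}$ (say) with $k = l = 1$, giving $a_1 = b_1 = \ell$. Next I would show that the remaining intersections $y_{kl} := a_k \cap b_l$ (with $a_k \neq b_l$) are pairwise distinct: a coincidence forces the common point onto $x_i \in a_k \cap a_{k'}$ or onto $x_j \in b_l \cap b_{l'}$, which is excluded because $x_i \in b_l$ (resp. $x_j \in a_k$) only when the corresponding index is that of $\ell \cap S_0$. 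Finally I would locate the points of $S_0$ among these: $P_k = a_k \cap b_k = y_{kk}$ whenever $a_k \neq b_k$, and no off-diagonal $y_{kl}$ lies in $S_0$, since $P_m \in a_k$ would put $x_i$ on the secant $\left\langle P_m, P_k\right\rangle$ unless $m=k$.

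With this bookkeeping in place the count is mechanical. If $\ell \cap S_0 = \emptyset$, there are $s^2$ distinct points $y_{kl}$, of which exactly the $s$ diagonal points lie in $S_0$, so $|H_i \cap H_j| = s^2 - s = |S_0|(|S_0|-1)$. If $\ell \cap S_0 = \{P_1\}$, the term $a_1 \cap b_1$ contributes the whole line $\ell$ ($q+1$ points), every mixed term with exactly one index equal to $1$ collapses onto $x_i$ or $x_j$ (both on $\ell$), the $(s-1)^2$ terms with both indices $\neq 1$ give distinct points lying off $\ell$, and one checks that $P_1$ is the only point of $S_0$ on $\ell$ while $P_2, \ldots, P_s$ reappear as the diagonal points $y_{kk}$; subtracting these $s$ points of $S_0$ yields $(q+1) + (s-1)^2 - s = (|S_0|-1)(|S_0|-2) + q$.

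I expect the main obstacle to be precisely the distinctness verification together with confirming that none of the off-diagonal points $y_{kl}$ accidentally lands on $\ell$ or inside $S_0$. These are the steps where the non-secant hypotheses on $x_i$ and $x_j$ must be invoked repeatedly, whereas the arithmetic that follows each incidence check is routine.
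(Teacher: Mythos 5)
Your proposal is correct and follows essentially the same route as the paper: split into the two cases $|\left\langle x_i,x_j\right\rangle\cap S_0|=0$ or $1$ (using that $\left\langle x_i,x_j\right\rangle$ cannot be a secant), and count the pairwise intersections of the two pencils of lines through $x_i$ and $x_j$ joined to $S_0$. Your write-up is somewhat more explicit than the paper's about the distinctness of the intersection points, about which of them fall into $S_0$, and about why the off-line points avoid $\left\langle x_i,x_j\right\rangle$ in the second case, but the underlying argument is the same.
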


\begin{proof}
	Suppose first that $\left\langle x_i, x_j \right\rangle \cap S_0 = \emptyset$. Then for every $z\neq z' \in S_0$,  the points $\left\langle x_i, z \right\rangle \cap \left\langle x_j, z' \right\rangle$ are distinct, $\left\langle x_i, z \right\rangle \cap \left\langle x_j, z' \right\rangle\in H_i\cap H_j$, and they are points outside $S_0$, hence  $|H_i\cap H_j|= {|S_0|(|S_0|-1)}$.\\
	In the other case  when $|\left\langle x_i, x_j \right\rangle \cap S_0| = 1$, a point $z^* \in S_0$ is incident to $\left\langle x_i, x_j \right\rangle$. Now for every $z\neq z' \in S_0$ with $z\neq z^* \neq z'$,  the points $\left\langle  x_i, z \right\rangle \cap \left\langle x_j, z' \right\rangle\in H_i\cap H_j$ are distinct again and  not belonging to  $S_0$, while  $\left\langle  x_i, z^* \right\rangle= \left\langle x_j, z^* \right\rangle $ thus  $\left\langle  x_i, z^* \right\rangle \setminus z^*$ also belongs to the intersection, hence the claim follows. 
\end{proof}

Lemma \ref{felezes} may thus be altered to a much more general lemma concerning the transversal number of $t$-intersecting uniform hypergraphs, applied to the sets $\{H_i=\left\langle x_i, S_0 \right\rangle\setminus S_0\}$. So an (approximate) solution for the open problem below would imply stronger results for the saturation problem as well.

\begin{problem} Given $\mathcal{F}= \{H_i: i=1\ldots m\}$  an $r$-uniform $t$-intersecting set system on an $n$-element ground set,
prove sharp upper bounds on  $\tau(\mathcal{F})$ in terms of $n, m$, $t$ and $r$.
\end{problem}

It is well known that while the trivial bound $\tau(\mathcal{F})\leq r-t+1$ can be sharp for dense hypergraphs, the approximation of the transversal number is hard in general. Due to Lov\'asz \cite{lovasz}, a connection is made with the fractional transversal  number $\tau^*{(\mathcal{F})}$ (which is also called fractional covering number) as $\tau < (1+\ln{d})\tau^*$, where $d$ is the maximal degree in the hypergraph. 

Note that this result yields instantly  the bound  $\tau < (1+\ln{d})\frac{rm}{r+(m-1)t}$.

By exploiting the intersection property, the greedy algorithm yields that there exists a  point in any $H_i$ that is contained in at least $1+\left\lceil \frac{tm}{r}\right\rceil$ sets from  $\mathcal{F}$. Hence the following proposition follows from the recursion:

\begin{prop}\label{qq} Let $\mathcal{F}= \{H_i: i=1\ldots m\}$ be an $r$-uniform $t$-intersecting  set system on an $n$-element ground set.
Then   $\tau(\mathcal{F})\leq \lceil\frac{rm}{tm+r}\ln{m} \rceil $.
\end{prop}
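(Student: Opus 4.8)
The plan is to prove Proposition~\ref{qq} by a greedy iteration, where at each step I select the vertex covering the most currently-uncovered edges, and then bound how fast the number of uncovered edges decays. First I would set up the recursion: suppose after some steps there are $m'$ edges still uncovered. These $m'$ edges still form an $r$-uniform $t$-intersecting family on at most $n$ vertices. The key structural input is the intersection property, stated in the excerpt just above the proposition: by a double-counting argument there must exist a vertex lying in at least $1+\lceil tm'/r\rceil$ of the uncovered edges. Indeed, fixing one uncovered edge $H_i$ with its $r$ vertices, every other uncovered edge $H_j$ meets $H_i$ in at least $t$ vertices, so summing incidences gives at least $t(m'-1)$ vertex-in-$H_i$ incidences over the remaining $m'-1$ edges, whence some vertex of $H_i$ meets at least $\lceil t(m'-1)/r\rceil$ of them; adding the edge $H_i$ itself yields the claimed $1+\lceil tm'/r\rceil$.

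Next I would convert this into a decay estimate. Picking such a heavy vertex into the transversal and deleting all edges it covers, the number of uncovered edges drops from $m'$ to at most
\[
m' - \left(1 + \frac{tm'}{r}\right) = m'\left(1 - \frac{t}{r}\right) - 1 < m'\left(1-\frac{t}{r}\right).
\]
Iterating this multiplicative contraction, after $s$ greedy steps the number of uncovered edges is at most $m\,(1-t/r)^s \le m\,e^{-st/r}$. I would then choose $s$ just large enough that this quantity drops below $1$, i.e. so that $m e^{-st/r}<1$, which happens once $s > \tfrac{r}{t}\ln m$. Rounding up gives a transversal of size roughly $\lceil \tfrac{r}{t}\ln m\rceil$ vertices.

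The main obstacle is matching the stated bound $\lceil\frac{rm}{tm+r}\ln m\rceil$ exactly rather than the cruder $\lceil\frac{r}{t}\ln m\rceil$: the claimed coefficient $\frac{rm}{tm+r}=\frac{r}{t+r/m}$ is slightly smaller, reflecting the extra ``$+1$'' in the heavy-vertex bound $1+\lceil tm'/r\rceil$ (equivalently, that each step removes $1 + tm'/r$ edges, not merely a $t/r$ fraction). To recover the sharper constant I would track the additive $-1$ term through the recursion instead of discarding it, so that the per-step factor effectively behaves like $\bigl(1-\frac{tm+r}{rm}\bigr)$ near the start; writing $a_s$ for the edge count after $s$ steps and analyzing the recursion $a_{s+1}\le a_s(1-t/r)-1$ (or the corresponding refined fractional bound) gives geometric decay with the improved exponent. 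The delicate part is justifying that the ceiling and the worst case over the range of $m'$ combine to give precisely the exponent $\frac{tm+r}{rm}$, so careful bookkeeping of the floor/ceiling and of where the $m'$-dependent term is largest will be where the real work lies; the geometric-decay skeleton itself is routine.
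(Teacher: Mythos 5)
Your proposal follows the paper's argument exactly: the paper likewise uses the $t$-intersecting property to find, by the same double count over a surviving edge, a vertex lying in at least $1+\lceil tm'/r\rceil$ of the surviving edges, and then lets the greedy recursion $a_{s+1}\le a_s(1-t/r)-1\le a_s\left(1-\frac{t}{r}-\frac{1}{m}\right)$ (whose additive $-1$ is precisely what upgrades the coefficient from $r/t$ to $rm/(tm+r)$) deliver the bound. You in fact supply more detail than the paper, which states the heavy-vertex claim in one sentence and asserts that the proposition ``follows from the recursion.''
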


For further results, we refer to the survey of F\"uredi \cite{furedi2} and the paper of Alon, Kalai and Matou\v{s}ek and Meshulam \cite{Alon1}.
The investigation of the transversal number of $t$-intersecting hypergraphs was proposed also by F\"uredi  in \cite{furedi1}.


\cut{

If we omit the intersecting property, Chv\'atal and McDiarmid proved 

\begin{theorem}\cite{Chv} Let $\mathcal{F}= \{H_i: i=1\ldots m\}$ be an $r$-uniform  set system on an $n$-element ground set.
Then   $\tau(\mathcal{F})\leq   (\left\lfloor r/2\right\rfloor m + n)/\left\lfloor 3r/2\right\rfloor. $
\end{theorem}

Further bounds on the transversal number can be found in \cite{Alon1, }

\begin{theorem}\label{qq} Let $\mathcal{F}= \{H_i: i=1\ldots m\}$ be an $r$-uniform $t$-intersecting  set system on an $n$-element ground set.
Then   $\tau(\mathcal{F})\leq  Z \approx \log_{\frac{r}{r-t}}{m}  $.
\end{theorem}

\begin{proof}[Sketch of the proof, so far]
Consider an arbitrary set $H_i$. According to the conditions, there exists a  point in $H_i$ that is contained in at least $1+\left\lceil \frac{tm}{r}\right\rceil$ set from  $\mathcal{F}$. Put this point to the blocking set and delete the covered sets from $\mathcal{F}$, then use induction. The resulting functional equation is 
$f(x)= 1+f(x- 1-\left\lceil \frac{tm}{r}\right\rceil)$, which  - if we ignore integer part - provides a solution of form $ \log_{\frac{r}{r-t}}{x} + c  $.
\end{proof}

We are to apply Theorem \ref{qq}.
Consider a point set $S_0$ in the projective plane. Let $X(S_0)=\{x_1, \ldots, x_l\}$ be the set of points not saturated by $S_0$, that is, those points which are not incident to the lines determined by the point pairs of $S_0$.  Assign to each $x_i$ a set $H_i$ of $|S_0|(q-1)+1$ points which make $x_i$ saturated, i.e. $H_i=\left\langle x_i, S_0 \right\rangle\setminus S_0$.\\
}

 Multiple saturating sets (see e.g. in \cite{Bartoli}) and their generalizations in higher dimensional spaces are also investigated. A point set $S$ in   $\PG(n, q)$ is saturating if any point of $\PG(n, q) \setminus S$ is collinear with
two points in $S$.  
The two proof techniques presented in Sections 2 and 3 are applicable in these more general settings as well.  Namely, the natural analogue of the Lunelli-Sce bound provides the following lower bound for  a saturating set $S$ in  $\PG(n, q)$:

\begin{prop}
$|S|\geq \sqrt{2}q^{\frac{n-1}{2}}$.
\end{prop}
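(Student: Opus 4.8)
The plan is to lift the double-counting argument behind the Lunelli--Sce bound (the planar case $|S|>\sqrt{2q}+1$) to arbitrary dimension. First I would record the two elementary parameters of $\PG(n,q)$: the total number of points is $N=\frac{q^{n+1}-1}{q-1}$, and every line contains exactly $q+1$ points. Since any saturating set clearly has $|S|\geq 2$ once $q\geq 2$, I may assume this throughout (otherwise the claimed inequality is vacuous).

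The core observation is that the \emph{secants} of $S$ --- the lines meeting $S$ in at least two points --- number at most $\binom{|S|}{2}$, because each such line is determined by (at least) one pair of points of $S$. Moreover every point of the space lies on a secant: a point outside $S$ does so by the saturating property, while a point $P\in S$ lies on the secant through $P$ and any other point of $S$. Hence the union of the secants is the whole point set, and bounding the size of a union by the sum of the sizes of the secants gives
\[
N\;\leq\;\binom{|S|}{2}(q+1).
\]
Rearranging, this reads $\binom{|S|}{2}\geq \frac{N}{q+1}=\frac{q^{n+1}-1}{q^2-1}$. The only estimate needed is $\frac{q^{n+1}-1}{q^2-1}>q^{n-1}$, which is equivalent to $q^{n-1}>1$ and therefore holds for all $n\geq 2$, $q\geq 2$. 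Consequently $|S|(|S|-1)>2q^{n-1}$, and since $|S|^2>|S|(|S|-1)$ I conclude $|S|>\sqrt{2}\,q^{(n-1)/2}$, which is the asserted bound (indeed with strict inequality). The planar statement is recovered as the case $n=2$.

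There is no substantial obstacle here: the argument is a clean covering/pigeonhole count, and the only point requiring a little care is extracting the exact constant $\sqrt{2}$ rather than a slightly weaker one. This is precisely why I use the crude but sufficient bound $\frac{N}{q+1}>q^{n-1}$, which discards only lower-order terms and so preserves the leading constant. If one wished to tighten the lower-order behaviour, one could instead count only the $N-|S|$ points outside $S$, each secant covering at most $q-1$ of them, giving $N-|S|\leq \binom{|S|}{2}(q-1)$; this yields the same leading term and hence the same stated bound.
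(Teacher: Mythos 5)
Your argument is correct and is exactly the ``natural analogue of the Lunelli--Sce bound'' that the paper invokes without proof: the secants of $S$ number at most $\binom{|S|}{2}$, each covers $q+1$ points, and together they must cover all $\frac{q^{n+1}-1}{q-1}$ points of $\PG(n,q)$, which gives $|S|^2>|S|(|S|-1)>2q^{n-1}$. Nothing is missing; the reduction $\frac{q^{n+1}-1}{q^2-1}>q^{n-1}$ is checked correctly and the planar case $n=2$ is recovered as stated.
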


The direct analogue of the first (probabilistic) approach shows the existence of a saturating set in  $\PG(n, q)$ of size $|S|\leq (1+o(1))\sqrt{(n+1)q^{{n-1}}\ln q}$, which improves previously known bounds for $n=4$.

\textbf{Acknowledgement}

Grateful acknowledgement is due to the anonymous referees for their helpful suggestions in order to improve the presentation of the paper.

\end{document}